\documentclass[11pt,twoside, final]{amsart}
\usepackage{amssymb}
\usepackage{amsmath,amsthm,amscd,amsfonts,amssymb,enumerate}
\usepackage{graphicx}		
\usepackage{color}
\usepackage[colorlinks]{hyperref}

\newtheorem{theorem}{Theorem}[section]

\newtheorem{corollary}[theorem]{Corollary}
\theoremstyle{definition}

\theoremstyle{remark}

\numberwithin{equation}{section}

\textwidth=13cm
\textheight=22cm
\evensidemargin=-5mm
\oddsidemargin=7mm
\thispagestyle{empty}

\begin{document}

\title[Generalized beta-type integral operators]{Generalized beta-type integral operators}

\author[Musharraf Ali]{Musharraf Ali}
\address[Musharraf Ali]{Department of Mathematics, G.F. College, Shahjahanpur-242001, India}
\email{drmusharrafali@gmail.com}

\author[Mohd Ghayasuddin]{Mohd Ghayasuddin}
\address[Mohd Ghayasuddin]{Department of Mathematics, Faculty of Science, Integral University, Lucknow-226026, India}
\email{ghayas.maths@gmail.com}

\author[R. B. Paris]{R. B. Paris $^*$}
\address[R. B. Paris]{Division of Computing and Mathematics, Abertay University, Dundee DD1 1HG, UK}
\email{r.paris@abertay.ac.uk}


  \thanks{$^*$Corresponding author}

\vspace{.50cm}
\parindent=5mm
 \maketitle
%

\begin{abstract}
This research note deals with the evaluation of some generalized beta-type integral operators involving the multi-index Mittag-Leffler function $E_{\epsilon_{i}),(\omega_{i})}(z)$. Further, we derive a new family of beta-type integrals involving the product of a multi-index Mittag-Leffler function and a generating function of two variables. Some concluding remarks regarding our present investigation are briefly discussed in the last section.\\

\textbf{Keywords:} Beta type integrals, Multi-index Mittag-Leffler function, Generating function, Wright hypergeometric function.  \\
\textbf{MSC(2010):} 33B15, 33C20, 33C65, 33E12.
\end{abstract}

\vspace{.50cm}
\parindent=8mm
\section{\bf Introduction}
Several basic functions in the applied sciences are defined via improper integrals, which are predominantly called special functions. Such functions play a remarkable role in various diverse fields of engineering and sciences. Therefore, many researchers have presented several extensions and associated properties of such type of functions.

In recent years, a number of authors have established a list of integral formulas associated with different kinds of special functions (see, for details,  \cite{a7}--\cite{saima}, \cite{a1}--\cite{a6} and the references cited therein). In order to extend the above-mentioned literature, we present in this paper a new class of beta-type integral operators associated with the multi-index Mittag-Leffler function and a generating function of two variables.

Throughout, let $\mathbb{N}$, $\mathbb{Z}$, $\mathbb{R}$, $\mathbb{R^{+}}$ and $\mathbb{C}$ be the sets of natural numbers, integers, real numbers, positive real numbers and complex numbers, respectively.

The generalized Wright hypergeometric function ${}_m{\Psi}_{n}$ is defined by (see \cite{s21}, see also \cite{g1} )
\begin{equation}\label{1.1} {}_m{\Psi}_{n} \left[\begin{array}{cccc}
(\lambda_{1}, G_{1}), & \cdots, & (\lambda_{m}, G_{m}); \\
~ & ~ & ~ \\
(\mu_{1}, H_{1}), & \cdots, & (\mu_{n}, H_{n}); \\
\end{array}x\right]=\sum_{k=0}^{\infty}\frac{\prod\limits_{j=1}^{m}\Gamma(\lambda_{j}+G_{j}k)}{\prod\limits_{j=1}^{n}\Gamma(\mu_{j}+H_{j}k)}\frac{x^{k}}{k!},
\end{equation}
where the coefficients $G_{j}\in \mathbb{R^{+}}~(j=1,\ldots ,m)$ and
$H_{j}\in \mathbb{R^{+}}~(j=1,\ldots ,n)$ are such that
\begin{equation}\label{1.2} 1+\sum_{j=1}^{n}H_{j}-\sum_{j=1}^{m}G_{j}\geq{0}.\end{equation}

The Mittag-Leffler function $E_{\lambda}(z)$ was introduced by the Swedish mathematician Gosta Mittag-Leffler \cite{g3} and is defined as follows:
\begin{equation}\label{1.3} E_{\lambda}(z)=\sum_{k=0}^{\infty}\frac{z^{k}}{\Gamma(1+\lambda
k)},
\end{equation}
where $z\in \mathbb{C}$ and $\lambda\geq0$.
The Mittag-Leffler function is a direct generalization of the exponential function to which it reduces when $\lambda=1$. Its importance has been
realized during the last two decades due to its involvement in problems of physics,
chemistry, biology, engineering and applied sciences. The Mittag-Leffler function
occurs naturally as the solution of certain fractional-order differential and integral equations.

Wiman \cite{g4} introduced the following extension of $E_{\lambda}(z)$:
\begin{equation}\label{1.4} E_{\lambda,\mu}(z)=\sum_{k=0}^{\infty}\frac{z^{k}}{\Gamma(\mu+\lambda
k)},\end{equation}
which is known as the Wiman function. The properties of the Wiman function $E_{\lambda,\mu}(z)$
and the Mittag-leffler function $E_{\lambda}(z)$ are very similar. These functions play a very important role in the solution of
differential equations of fractional order.

Subsequently, Kiryakova \cite{g2} introduced the following new class of multi-index Mittag-Leffler functions:
\begin{equation}\label{1.5} E_{(\frac{1}{\epsilon_{i}}),(\omega_{i})}(z)=\sum_{k=0}^{\infty}\frac{z^{k}}{\Gamma({\omega}_{1}+\epsilon_{1}k)\cdots\Gamma(\omega_{l}+\epsilon_{l}k)},
\end{equation}
where $l>1$ is an integer, ${\epsilon}_{1},\ldots ,{\epsilon}_{l}>0$ and
${\omega}_{1},\ldots ,{\omega_{l}}$ are arbitrary real numbers. Some special cases of \eqref{1.5} for $l=2$ are given as follows (see \cite{g2}, \cite{g5}):\\

\noindent $\bullet$ If 
$\epsilon_{1}=\lambda$, $\epsilon_{2}=0$, and $\omega_{1}=\mu$, $\omega_{2}=1$ then \eqref{1.5} reduces to \eqref{1.4} (and to \eqref{1.3} when $\mu=1$).\\



\noindent$\bullet$ If $\epsilon_{1}=\epsilon_{2}=1$ and $z$ is replaced by $z^2/4$ then
we have  \cite{s21}
\begin{equation}\label{1.6} 
\omega_1=1+\nu, \omega_2=1:\qquad E_{(1,1),(1+\nu,1)}(-z^{2}/4)=\left(\frac{1}{2}z\right)^{-\nu}J_{\nu}(z),
\end{equation}


\begin{equation}\label{1.7} 
\omega_1=\frac{3+\mu-\nu}{2}, \omega_2=\frac{3+\mu+\nu}{2}:\quad  E_{(1,1),(\frac{3-\nu+\mu}{2},\frac{3+\nu+\mu}{2})}(-z^{2}/4)=\frac{4}{z^{\mu+1}}~S_{\mu,\nu}(z),
\end{equation}

and
\begin{equation}\label{1.8} 
\omega_1=\frac{3}{2}, \omega_2=\frac{3}{2}+\nu:\qquad E_{(1,1),(\frac{3}{2},\frac{3+2\nu}{2})}(-z^{2}/4)=\frac{4}{z^{\mu+1}}~H_{\nu}(z),
\end{equation}
where $J_\nu(z)$, $S_{\mu,\nu}(z)$ and $H_{\nu}(z)$ are  respectively the Bessel function of the first kind, the Struve function and the Lommel function.

\section{\bf Evaluation of Beta-type integrals}
In this section, we present some theorems on the evaluation of the beta-type integral
\begin{equation}\label{2.1} \mathbb{J}_{a_{1},a_{2},\epsilon_{i},\omega_{i}}^{\eta_{1}, \eta_{2}, \eta_{3}}[h_{1}(u),h_{2}(u);q]\end{equation}
\begin{equation*} =\frac{1}{B(\eta_{1}, \eta_{2})}\int_{a_{1}}^{a_{2}} (u-a_{1})^{\eta_{1}-1}(a_{2}-u)^{\eta_{2}-1}[h_{1}(u)]^{\eta_{3}}E_{(\frac{1}{\epsilon_{i}}),(\omega_{i})}[qh_{2}(u)]du,
\end{equation*}
for some specific functions $h_{1}(u)$, $h_{2}(u)$ and $q\in {\mathbb{C}}$, where $B(x,y)=\Gamma(x) \Gamma(y)/\Gamma(x+y)$ is the Beta function.

\begin{theorem} The following integral formula holds true:

\begin{equation*} \mathbb{J}_{0,1,\epsilon_{i},\omega_{i}}^{\eta_{1}, \eta_{2}, 1}[(1-z_{1}u)^{-\beta_{1}}(1-z_{2}u)^{-\beta_{2}},u(1-u);q]=\frac{1}{B(\eta_{1}, \eta_{2})} \sum_{r,s=0}^{\infty} \frac{(\beta_{1})_{r}(\beta_{2})_{s}{z_{1}}^{r}{z_{2}}^{s}}{r!~s!}\end{equation*}

\begin{equation}\label{2.2} \times {}_3\Psi_{l+1}
\left[\begin{array}{ccc}
  (\eta_{1}+r+s,1), & (\eta_{2},1), & (1,1); \\
  ~ & ~ & ~ \\
  (\omega_{1}, \epsilon_{1}), & \cdots (\omega_{l}, \epsilon_{l}), & (\eta_{1}+\eta_{2}+r+s, 2);
\end{array} q \right],
\end{equation}
where $\Re(\eta_{1})>0,~\Re(\eta_{2})>0$ and $|z_{1}|<1,~|z_{2}| <1$.
\end{theorem}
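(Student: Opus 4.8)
The plan is to reduce the integral to a triple power series by expanding all of its non-elementary factors, to interchange summation and integration, to evaluate the surviving $u$-integral as a classical beta integral, and finally to repackage the innermost sum over the summation index as a generalized Wright function ${}_3\Psi_{l+1}$.

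First I would expand the two algebraic factors by the binomial theorem. Since $u\in[0,1]$ and $|z_1|,|z_2|<1$, the series
\[
(1-z_1u)^{-\beta_1}=\sum_{r=0}^{\infty}\frac{(\beta_1)_r}{r!}(z_1u)^r,\qquad
(1-z_2u)^{-\beta_2}=\sum_{s=0}^{\infty}\frac{(\beta_2)_s}{s!}(z_2u)^s
\]
converge uniformly on $[0,1]$. Next I would substitute the defining series \eqref{1.5} of the multi-index Mittag-Leffler function with argument $qu(1-u)$, namely
\[
E_{(\frac{1}{\epsilon_i}),(\omega_i)}[qu(1-u)]=\sum_{k=0}^{\infty}\frac{q^k\,u^k(1-u)^k}{\Gamma(\omega_1+\epsilon_1 k)\cdots\Gamma(\omega_l+\epsilon_l k)},
\]
which, being entire in its argument, also converges uniformly for $u\in[0,1]$.

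Granting that the three summations may be interchanged with the integral, the $u$-integral collapses to the elementary beta integral
\[
\int_0^1 u^{\eta_1+r+s+k-1}(1-u)^{\eta_2+k-1}\,du
=\frac{\Gamma(\eta_1+r+s+k)\,\Gamma(\eta_2+k)}{\Gamma(\eta_1+\eta_2+r+s+2k)},
\]
valid for all $r,s,k\ge 0$ precisely because $\Re(\eta_1)>0$ and $\Re(\eta_2)>0$. Substituting this back leaves a sum over $k$ of the shape
\[
\sum_{k=0}^{\infty}\frac{\Gamma(\eta_1+r+s+k)\,\Gamma(\eta_2+k)}{\Gamma(\omega_1+\epsilon_1 k)\cdots\Gamma(\omega_l+\epsilon_l k)\,\Gamma(\eta_1+\eta_2+r+s+2k)}\,q^k .
\]
Writing $k!=\Gamma(1+k)$ so as to reinstate the harmless factor $\Gamma(1+k)/k!=1$, this sum is identified against the definition \eqref{1.1} of ${}_m\Psi_n$ with the three numerator pairs $(\eta_1+r+s,1)$, $(\eta_2,1)$, $(1,1)$ and the $l+1$ denominator pairs $(\omega_1,\epsilon_1),\ldots,(\omega_l,\epsilon_l)$, $(\eta_1+\eta_2+r+s,2)$, which is exactly the ${}_3\Psi_{l+1}$ appearing in \eqref{2.2}; the outstanding double sum over $r$ and $s$ supplies the stated prefactor.

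The step I expect to be the main obstacle is the rigorous justification of the term-by-term integration, since three infinite series are interchanged with the integral at once. I would control this by absolute convergence: on $[0,1]$ one has $0\le u,1-u\le 1$, so the Mittag-Leffler series and the two binomial series are each dominated by convergent majorants independent of $u$ under the hypotheses $|z_1|,|z_2|<1$; a Fubini--Tonelli argument applied to the resulting nonnegative triple series then legitimizes the rearrangement, after which the identification with ${}_3\Psi_{l+1}$ is purely formal.
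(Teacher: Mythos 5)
Your proposal is correct, and the computation it produces is exactly the triple series that yields \eqref{2.2}; the difference from the paper lies in the key input used to evaluate the $u$-integral. The paper expands only the multi-index Mittag--Leffler function, then invokes the cited Euler-type representation \eqref{2.3} of the Appell function $F_{1}$ with the shifted parameters $\eta_{1}+k$, $\eta_{2}+k$, and finally (implicitly, in the ``after some simplification'' step) expands $F_{1}$ as its double series in $z_{1},z_{2}$ to extract the outer sum over $r,s$. You instead unpack everything from the start: the two binomial series together with the Mittag--Leffler series reduce the problem to the elementary beta integral
\[
\int_{0}^{1}u^{\eta_{1}+r+s+k-1}(1-u)^{\eta_{2}+k-1}\,du
=\frac{\Gamma(\eta_{1}+r+s+k)\,\Gamma(\eta_{2}+k)}{\Gamma(\eta_{1}+\eta_{2}+r+s+2k)},
\]
which is precisely what one obtains by combining \eqref{2.3} with the series definition of $F_{1}$. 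Your route is more elementary and self-contained (no appeal to the Ismail--Pitman formula), and it has the advantage of making the justification of term-by-term integration explicit via absolute convergence and Tonelli, a point the paper passes over in silence; the paper's route is shorter on the page because it delegates the $r,s$-structure to a known special-function identity. Both arrive at the same $k$-sum and the same identification with ${}_3\Psi_{l+1}$, so the proofs are equivalent in substance.
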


\begin{proof}
We have (see \cite[p.962(7)]{s11})
\begin{equation}\label{2.3} \int_{0}^{1}u^{\eta_{1}-1} (1-u)^{\eta_{2}-1}(1-z_{1}u)^{-\beta_{1}}(1-z_{2}u)^{-\beta_{2}}du=B(\eta_{1}, \eta_{2})
\end{equation}
\begin{equation*}
\times F_{1}(\eta_{1}, \beta_{1}, \beta_{2}; \eta_{1}+\eta_{2}; z_{1},z_{2}),
\end{equation*}
where $\Re(\eta_{1})>0,~\Re(\eta_{2})>0$, $|z_{1}|<1,~|z_{2}|< 1$ and $F_{1}$ is the Appell function \cite{s21}; see also \cite[p.~413]{s12}.\\

On setting $a_{1}=0,~a_{2}=\eta_{3}=1,~h_{2}(u)=u(1-u)$ and $h_{1}(u)=(1-z_{1}u)^{-\beta_{1}}(1-z_{2}u)^{-\beta_{2}}$ in \eqref{2.1}, expanding the multi-index Mittag-Leffler function $E_{(\epsilon_{i}),(\omega_{i})}$ by its defining series, applying the result \eqref{2.3} on the right side and after some simplification, we obtain our required result \eqref{2.2}.
\end{proof}

\begin{theorem} The following integral formula holds true:

\begin{equation*} \mathbb{J}_{0,1,\epsilon_{i},\omega_{i}}^{\eta_{1}, \eta_{2}, 1}[(1-z_{1}u)^{-\beta_{1}}(1-z_{2}(1-u))^{-\beta_{2}},u(1-u);q]=\frac{1}{B(\eta_{1}, \eta_{2})} \sum_{r,s=0}^{\infty} \frac{(\beta_{1})_{r}(\beta_{2})_{s}{z_{1}}^{r}{z_{2}}^{s}}{r!~s!}\end{equation*}

\begin{equation}\label{2.4} \times {}_3\Psi_{l+1}
\left[\begin{array}{ccc}
  (\eta_{1}+r,1), & (\eta_{2}+s,1), & (1,1); \\
  ~ & ~ & ~ \\
  (\omega_{1}, \epsilon_{1}), & \cdots (\omega_{l}, \epsilon_{l}), & (\eta_{1}+\eta_{2}+r+s, 2);
\end{array} q \right],
\end{equation}
where $\Re(\eta_{1})>0,~\Re(\eta_{2})>0$ and $|z_{1}|<1,~|z_{2}| <1$.
\end{theorem}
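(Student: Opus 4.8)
The plan is to follow the template of the preceding theorem almost verbatim, the sole structural novelty being that the second binomial factor now depends on $1-u$ rather than on $u$, which redistributes the two summation indices between the parameters $\eta_1$ and $\eta_2$. Setting $a_1=0$, $a_2=\eta_3=1$, $h_2(u)=u(1-u)$ and $h_1(u)=(1-z_1u)^{-\beta_1}(1-z_2(1-u))^{-\beta_2}$ in the definition \eqref{2.1}, I would first insert the defining series \eqref{1.5} of the multi-index Mittag-Leffler function, so that $E_{(\frac{1}{\epsilon_i}),(\omega_i)}[qu(1-u)]$ contributes the factor $q^k\,u^k(1-u)^k/\prod_{j=1}^{l}\Gamma(\omega_j+\epsilon_j k)$ summed over $k\ge 0$.

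The next step is to expand each binomial factor by the generalized binomial theorem, writing $(1-z_1u)^{-\beta_1}=\sum_{r\ge 0}(\beta_1)_r z_1^r u^r/r!$ and $(1-z_2(1-u))^{-\beta_2}=\sum_{s\ge 0}(\beta_2)_s z_2^s (1-u)^s/s!$, both valid for $|z_1|<1$ and $|z_2|<1$. After interchanging the three summations (over $k$, $r$, $s$) with the integral, the problem reduces to evaluating, for each triple $(k,r,s)$, the elementary beta integral $\int_0^1 u^{\eta_1+k+r-1}(1-u)^{\eta_2+k+s-1}\,du=B(\eta_1+k+r,\eta_2+k+s)$. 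It is exactly here that the difference from the previous theorem surfaces: the $u^r$ from the first factor shifts the exponent of $u$ while the $(1-u)^s$ from the second factor shifts the exponent of $1-u$, so the index $r$ attaches to $\eta_1$ and the index $s$ attaches to $\eta_2$, whereas in the earlier case both shifts landed on $\eta_1$.

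Writing $B(\eta_1+k+r,\eta_2+k+s)=\Gamma(\eta_1+k+r)\Gamma(\eta_2+k+s)/\Gamma(\eta_1+\eta_2+2k+r+s)$ and pulling the $(r,s)$-sum outside, I would then collect the residual $k$-sum
\[\sum_{k=0}^{\infty}\frac{\Gamma(\eta_1+r+k)\,\Gamma(\eta_2+s+k)}{\Gamma(\eta_1+\eta_2+r+s+2k)\,\prod_{j=1}^{l}\Gamma(\omega_j+\epsilon_j k)}\,q^k\]
and recognize it, upon inserting $\Gamma(1+k)/k!=1$ to supply the formal numerator parameter $(1,1)$, as precisely the Wright function ${}_3\Psi_{l+1}$ in \eqref{2.4}; the denominator pair $(\eta_1+\eta_2+r+s,2)$ accounts for the factor $2k$ appearing in the lower gamma function. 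Collecting the prefactor $1/B(\eta_1,\eta_2)$ from \eqref{2.1} together with the $(\beta_1)_r(\beta_2)_s z_1^r z_2^s/(r!\,s!)$ coefficients then yields \eqref{2.4}.

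The only genuine obstacle is the justification of the term-by-term interchange of the triple summation with the integration, which must be established before the beta evaluation is legitimate; this follows from absolute convergence under the stated hypotheses $\Re(\eta_1)>0$, $\Re(\eta_2)>0$, $|z_1|<1$ and $|z_2|<1$, which guarantee the integrability of each term and the summability of their absolute values. Everything beyond this point is routine bookkeeping, entirely parallel to the proof of the previous theorem.
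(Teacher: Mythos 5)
Your argument is correct and reaches \eqref{2.4} by a computation that is equivalent to the paper's, but it is organized around a different key step. The paper's proof expands only the multi-index Mittag-Leffler function, and then for each index $k$ invokes the known Euler-type integral \eqref{2.5} of Srivastava and Karlsson, which evaluates $\int_0^1 u^{\eta_1-1}(1-u)^{\eta_2-1}(1-z_1u)^{-\beta_1}(1-z_2(1-u))^{-\beta_2}\,du$ in closed form as $B(\eta_1,\eta_2)\,F_3(\eta_1,\eta_2,\beta_1,\beta_2;\eta_1+\eta_2;z_1,z_2)$; the double sum over $(r,s)$ in \eqref{2.4} then comes from writing out the double series of the Appell function $F_3$ with the shifted parameters $\eta_1+k$, $\eta_2+k$, $\eta_1+\eta_2+2k$. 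You instead bypass $F_3$ entirely: you expand both binomial factors by the generalized binomial theorem and reduce everything to the elementary beta integral $B(\eta_1+k+r,\eta_2+k+s)$, which amounts to re-deriving \eqref{2.5} inline. The two routes produce identical gamma-quotients, and your identification of the residual $k$-sum as the ${}_3\Psi_{l+1}$ in \eqref{2.4} (including the insertion of the $(1,1)$ parameter via $\Gamma(1+k)/k!$) is exactly right, as is your observation that the $(1-u)^s$ factor is what moves the index $s$ onto $\eta_2$ rather than $\eta_1$. What the paper's route buys is brevity and an explicit link to the Appell-function literature; what yours buys is self-containedness and a transparent view of where each parameter shift originates. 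Your remark on justifying the interchange of summation and integration is more than the paper itself offers, which passes over this point in silence.
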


\begin{proof}
We have (see \cite[p.279(17)]{s20})
\begin{equation}\label{2.5} \int_{0}^{1}u^{\eta_{1}-1} (1-u)^{\eta_{2}-1}(1-z_{1}u)^{-\beta_{1}}(1-z_{2}(1-u))^{-\beta_{2}}du=B(\eta_{1}, \eta_{2})
\end{equation}
\begin{equation*}
\times F_{3}(\eta_{1},\eta_{2}, \beta_{1}, \beta_{2}; \eta_{1}+\eta_{2}; z_{1},z_{2}),
\end{equation*}
where $\Re(\eta_{1})>0,~\Re(\eta_{2})>0$, $|z_{1}|<1,~|z_{2}|< 1$ and $F_{3}$ is the Appell function \cite{s21}; see also \cite[p.~413]{s12}.\\

On putting $a_{1}=0,~a_{2}=\eta_{3}=1,~h_{2}(u)=u(1-u)$ and $h_{1}(u)=(1-z_{1}u)^{-\beta_{1}}(1-z_{2}(1-u))^{-\beta_{2}}$ in \eqref{2.1}, expanding the multi-index Mittag-Leffler function $E_{(\epsilon_{i}),(\omega_{i})}$ by its defining series, applying the result \eqref{2.5} on the right side and after some simplification, we obtain our required result \eqref{2.4}.
\end{proof}

\begin{theorem} The following integral formula holds true:

\begin{equation*} \mathbb{J}_{a_{1},a_{2},\epsilon_{i},\omega_{i}}^{\eta_{1}, \eta_{2}, \eta_{3}}[xu+y,(u-a_{1})(a_{2}-u);q]=\frac{1}{B(\eta_{1}, \eta_{2})} \sum_{r=0}^{\infty} \frac{(-\eta_{3})_{r}}{r!}\left\{-\frac{(a_{2}-a_{1})x}{a_{1}x+y}\right\}^{r}\end{equation*}

\begin{equation}\label{2.6} \times {}_3\Psi_{l+1}
\left[\begin{array}{ccc}
  (\eta_{1}+r,1), & (\eta_{2},1), & (1,1); \\
  ~ & ~ & ~ \\
  (\omega_{1}, \epsilon_{1}), & \cdots (\omega_{l}, \epsilon_{l}), & (\eta_{1}+\eta_{2}+r, 2);
\end{array} q \right],
\end{equation}
where $\Re(\eta_{1})>0,~\Re(\eta_{2})>0$, $\left|\arg \left( \frac{a_{2}x+y}{a_{1}x+y}\right)\right|< \pi$ and $a_{1}\neq a_{2}$.
\end{theorem}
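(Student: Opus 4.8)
The plan is to imitate the template of Theorems 2.1 and 2.2: expand the multi-index Mittag-Leffler function by its defining series \eqref{1.5}, interchange summation and integration, and reduce everything to a single classical beta-type integral that can be resummed into a Wright function. Concretely, I would first insert $a_1,a_2$, $h_1(u)=xu+y$, $h_2(u)=(u-a_1)(a_2-u)$ and the exponent $\eta_3$ into \eqref{2.1}, and replace $E_{(1/\epsilon_i),(\omega_i)}[q(u-a_1)(a_2-u)]$ by $\sum_{k\ge0}q^k(u-a_1)^k(a_2-u)^k/\prod_{j=1}^{l}\Gamma(\omega_j+\epsilon_j k)$. Since this series is entire in its argument and $\Re(\eta_1),\Re(\eta_2)>0$ makes each term absolutely integrable, the interchange of $\int$ and $\sum_k$ is legitimate and leaves the family of integrals $I_k=\int_{a_1}^{a_2}(u-a_1)^{\eta_1+k-1}(a_2-u)^{\eta_2+k-1}(xu+y)^{\eta_3}\,du$.

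The heart of the argument is the evaluation of $I_k$. I would substitute $u=a_1+(a_2-a_1)t$ to carry the interval to $[0,1]$ and factor $(xu+y)^{\eta_3}=(a_1x+y)^{\eta_3}(1-zt)^{\eta_3}$ with $z=-(a_2-a_1)x/(a_1x+y)$. The value $1-zt=(a_2x+y)/(a_1x+y)$ at $t=1$ is exactly what the hypothesis $|\arg((a_2x+y)/(a_1x+y))|<\pi$ controls: it keeps $(1-zt)^{\eta_3}$ single-valued on $[0,1]$ and validates the binomial expansion $(1-zt)^{\eta_3}=\sum_{r\ge0}\frac{(-\eta_3)_r}{r!}(zt)^r$. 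Integrating term by term against $t^{\eta_1+k-1}(1-t)^{\eta_2+k-1}$ produces $B(\eta_1+k+r,\eta_2+k)$; equivalently, $I_k=(a_2-a_1)^{\eta_1+\eta_2+2k-1}(a_1x+y)^{\eta_3}B(\eta_1+k,\eta_2+k)\,{}_2F_1(-\eta_3,\eta_1+k;\eta_1+\eta_2+2k;z)$, which is merely Euler's integral representation of the Gauss function (the classical analogue of the cited formulas \eqref{2.3} and \eqref{2.5}).

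To finish I would expand that ${}_2F_1$ as a series in $r$, apply the gamma-function identity $B(\eta_1+k,\eta_2+k)\,(\eta_1+k)_r/(\eta_1+\eta_2+2k)_r=\Gamma(\eta_1+k+r)\Gamma(\eta_2+k)/\Gamma(\eta_1+\eta_2+2k+r)$, interchange the $r$- and $k$-sums, and pull the $r$-summation (carrying $(-\eta_3)_r/r!$ together with $z^r=\{-(a_2-a_1)x/(a_1x+y)\}^r$) to the front. Inserting $\Gamma(1+k)/k!=1$ to supply the numerator pair $(1,1)$, the residual $k$-sum is precisely the ${}_3\Psi_{l+1}$ displayed in \eqref{2.6}.

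The one point that demands care — and the likeliest obstacle — is the bookkeeping of the scale factors. The computation above naturally yields the overall prefactor $(a_1x+y)^{\eta_3}(a_2-a_1)^{\eta_1+\eta_2-1}$ and the argument $q(a_2-a_1)^2$ inside the Wright function, rather than the bare $q$ shown in \eqref{2.6}. These collapse to $1$ in Theorems 2.1 and 2.2 because there $a_1=0,\,a_2=1$ (so $a_2-a_1=1$) and $h_1$ is not of the form $xu+y$; for general $a_1\ne a_2$ they must be tracked explicitly, so I would either retain them in the statement or absorb them into a suitably normalised operator before asserting \eqref{2.6}.
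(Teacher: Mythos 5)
Your proof follows the same route as the paper's: expand the multi-index Mittag-Leffler function by its defining series, reduce the problem to the classical integral $\int_{a_1}^{a_2}(u-a_1)^{\eta_1+k-1}(a_2-u)^{\eta_2+k-1}(xu+y)^{\eta_3}\,du$, and resum into a ${}_3\Psi_{l+1}$. The only methodological difference is that you derive that integral from Euler's representation of ${}_2F_1$ via the substitution $u=a_1+(a_2-a_1)t$, whereas the paper simply cites it from Prudnikov et al.\ as \eqref{2.7}. Your closing caveat is the most valuable part of the write-up: formula \eqref{2.7} as transcribed in the paper omits the prefactor $(a_2-a_1)^{\eta_1+\eta_2-1}(a_1x+y)^{\eta_3}$ that your substitution manifestly produces, and consequently the stated result \eqref{2.6} is missing both this overall prefactor and the replacement of $q$ by $q(a_2-a_1)^2$ in the argument of the Wright function (the shift $\eta_1\mapsto\eta_1+k$, $\eta_2\mapsto\eta_2+k$ contributes $(a_2-a_1)^{2k}$, which must be absorbed into $q^k$). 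These factors are invisible in Theorems 2.1 and 2.2 only because there $a_1=0$ and $a_2=1$; for general endpoints they do not collapse. So your derivation is correct and, unlike the printed proof, keeps the scale factors honest: the theorem as stated holds only after the normalisation you describe. One minor point worth adding is that the term-by-term binomial expansion of $(1-zt)^{\eta_3}$, and equally the outer $r$-sum in \eqref{2.6}, require $\bigl|(a_2-a_1)x/(a_1x+y)\bigr|<1$ (or $\eta_3$ a non-negative integer so the sum terminates); the argument condition $\left|\arg\left(\frac{a_2x+y}{a_1x+y}\right)\right|<\pi$ alone guarantees only the single-valuedness of the integrand, not the convergence of that series.
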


\begin{proof}
We have (see \cite[p.263]{s15})
\begin{equation}\label{2.7} \int_{a_{1}}^{a_{2}} (u-a_{1})^{\eta_{1}-1} (a_{2}-u)^{\eta_{2}-1}(xu+y)^{\eta_{3}}du=B(\eta_{1}, \eta_{2})
\end{equation}
\begin{equation*}
\times {}_2F_{1}\left(-\eta_{3},\eta_{1}; \eta_{1}+\eta_{2}; -\frac{(a_{2}-a_{1})x}{a_{1}x+y}\right),
\end{equation*}
where $\Re(\eta_{1})>0,~\Re(\eta_{2})>0$, $\left|\arg \left( \frac{a_{2}x+y}{a_{1}x+y}\right)\right|< \pi$, $a_{1}\neq a_{2}$ and ${}_2F_{1}$ is the Gauss hypergeometric function \cite{s21}; see also \cite[p.~384]{s12}.\\

On setting $h_{1}(u)=xu+y$ and $h_{2}(u)=(u-a_{1})(a_{2}-u)$ in \eqref{2.1}, expanding the multi-index Mittag-Leffler function $E_{(\epsilon_{i}),(\omega_{i})}$ by its defining series, applying the result \eqref{2.7} on the right side and after some simplification, we obtain \eqref{2.6}.
\end{proof}

\begin{theorem} The following integral formula holds true:

\begin{equation*} \mathbb{J}_{a_{1},a_{2},\epsilon_{i},\omega_{i}}^{\eta_{1}, \eta_{2}, -(\eta_{1}+\eta_{2})}\left[(a_{2}-a_{1})+\xi(u-a_{1})+\sigma(a_{2}-u),\frac{(u-a_{1})(a_{2}-u)}{[(a_{2}-a_{1})+\xi(u-a_{1})+\sigma(a_{2}-u)]^{2}};q\right]\end{equation*}

\begin{equation}\label{2.8} =\frac{(\xi+1)^{-\eta_{1}}(\sigma+1)^{-\eta_{2}}}{B(\eta_{1}, \eta_{2})(a_{2}-a_{1})}~{}_3\Psi_{l+1}
\left[\begin{array}{ccc}
  (\eta_{1},1), & (\eta_{2},1), & (1,1); \\
  ~ & ~ & ~ \\
  (\omega_{1}, \frac{1}{\epsilon_{1}}), & \cdots (\omega_{l}, \frac{1}{\epsilon_{l}}), & (\eta_{1}+\eta_{2}, 2);
\end{array} \frac{q}{(\xi+1)(\sigma+1)} \right],
\end{equation}
where $\Re(\eta_{1})>0,~\Re(\eta_{2})>0$, $(a_{2}-a_{1})+\xi(u-a_{1})+\sigma(a_{2}-u) \neq 0$ and $a_{1}\neq a_{2}$.
\end{theorem}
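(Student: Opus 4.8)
The plan is to follow the same three-step template used in the proofs of Theorems~2.1--2.3: substitute the prescribed data into the operator~\eqref{2.1}, expand the multi-index Mittag-Leffler function by its defining series~\eqref{1.5}, interchange summation and integration, and then evaluate the resulting elementary beta-type integral term by term. Here one takes $\eta_3=-(\eta_1+\eta_2)$, together with $h_1(u)=(a_2-a_1)+\xi(u-a_1)+\sigma(a_2-u)$ and $h_2(u)=(u-a_1)(a_2-u)/[h_1(u)]^2$. The point I would stress at the outset is that these choices are engineered so that the two occurrences of $h_1(u)$ coalesce: since $[h_2(u)]^{k}=(u-a_1)^{k}(a_2-u)^{k}[h_1(u)]^{-2k}$, the $k$-th term of the expanded integrand carries $(u-a_1)$ and $(a_2-u)$ to the powers $\eta_1+k-1$ and $\eta_2+k-1$, while $h_1(u)$ is raised to the power $-(\eta_1+\eta_2+2k)$, which is exactly minus the sum $(\eta_1+k)+(\eta_2+k)$ of the beta exponents of the two factors. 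This degeneracy is what distinguishes the present case from Theorem~2.3, where a general $\eta_3$ leaves a genuine ${}_2F_1$.

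Consequently, after interchanging, everything reduces to evaluating, for each $k\ge 0$, the integral
\begin{equation*}
I_k=\int_{a_1}^{a_2}(u-a_1)^{\eta_1+k-1}(a_2-u)^{\eta_2+k-1}\bigl[(a_2-a_1)+\xi(u-a_1)+\sigma(a_2-u)\bigr]^{-(\eta_1+\eta_2+2k)}\,du.
\end{equation*}
This is the crux of the argument and the part I expect to be the main obstacle. The key observation is that the normalising substitution $u=a_1+(a_2-a_1)t$ recasts the bracket as $(a_2-a_1)\bigl[(1+\xi)t+(1+\sigma)(1-t)\bigr]$; the powers of $(a_2-a_1)$ then collect to a single factor $(a_2-a_1)^{-1}$, leaving a degenerate Euler integral of the form
\begin{equation*}
\int_0^1\frac{t^{A-1}(1-t)^{B-1}}{[\,a\,t+b(1-t)\,]^{A+B}}\,dt=\frac{B(A,B)}{a^{A}b^{B}},
\end{equation*}
with $a=1+\xi$, $b=1+\sigma$, $A=\eta_1+k$, $B=\eta_2+k$. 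I would either cite this formula (as \eqref{2.3}, \eqref{2.5} and \eqref{2.7} are cited) or derive it in one line via the rational substitution $s=at/[\,at+b(1-t)\,]$, which carries the integral into the standard beta integral $B(A,B)$: the exponent $A+B$ on the denominator is precisely what forces all powers of the transformed denominator to cancel. This yields $I_k=(a_2-a_1)^{-1}(1+\xi)^{-(\eta_1+k)}(1+\sigma)^{-(\eta_2+k)}\,\Gamma(\eta_1+k)\Gamma(\eta_2+k)/\Gamma(\eta_1+\eta_2+2k)$.

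It then remains to reassemble the series. Substituting $I_k$ back, I would extract the $k$-independent factor $(a_2-a_1)^{-1}(1+\xi)^{-\eta_1}(1+\sigma)^{-\eta_2}/B(\eta_1,\eta_2)$ from the sum, leaving the summand
\begin{equation*}
\frac{\Gamma(\eta_1+k)\,\Gamma(\eta_2+k)}{\Gamma(\eta_1+\eta_2+2k)\,\prod_{j=1}^{l}\Gamma(\omega_j+\epsilon_j k)}\left(\frac{q}{(1+\xi)(1+\sigma)}\right)^{k}.
\end{equation*}
Inserting the trivial factor $\Gamma(1+k)/k!=1$ into each term supplies both the $(1,1)$ numerator parameter and the $1/k!$ demanded by the definition~\eqref{1.1}, so the sum is exactly the Wright function ${}_3\Psi_{l+1}$ of \eqref{2.8}, evaluated at $q/[(1+\xi)(1+\sigma)]$, with upper parameters $(\eta_1,1),(\eta_2,1),(1,1)$ and lower parameters $(\omega_1,\epsilon_1),\ldots,(\omega_l,\epsilon_l),(\eta_1+\eta_2,2)$ inherited from~\eqref{1.5}. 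The only analytic point to record is the legitimacy of the term-by-term integration: the hypothesis $h_1(u)\neq 0$ on $[a_1,a_2]$ keeps the integrand bounded, while the rapid decay of the Mittag-Leffler coefficients $1/\prod_j\Gamma(\omega_j+\epsilon_j k)$ guarantees absolute and uniform convergence, so that Fubini's theorem justifies the interchange and completes the proof.
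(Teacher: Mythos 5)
Your proof is correct and follows essentially the same route as the paper: the authors likewise expand the multi-index Mittag-Leffler function by its defining series, interchange sum and integral, and evaluate each term by the cited degenerate Euler integral \eqref{2.9} with $\eta_1\mapsto\eta_1+k$, $\eta_2\mapsto\eta_2+k$ (your substitution $u=a_1+(a_2-a_1)t$ simply reproves that formula rather than citing it). Note also that your lower Wright parameters $(\omega_j,\epsilon_j)$ are the ones consistent with the definition \eqref{1.5} and with Theorems 2.1--2.3, so the $(\omega_j,\tfrac{1}{\epsilon_j})$ appearing in \eqref{2.8} is evidently a typographical slip in the statement, not an error in your argument.
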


\begin{proof}
We have (see \cite[p.261(3.1)]{s15})
\begin{equation}\label{2.9} \int_{a_{1}}^{a_{2}} \frac{(u-a_{1})^{\eta_{1}-1} (a_{2}-u)^{\eta_{2}-1}~du}{[(a_{2}-a_{1})+\xi(u-a_{1})+\sigma(a_{2}-u)]^{\eta_{1}+\eta_{2}}}= \frac{ B(\eta_{1}, \eta_{2}) (\xi+1)^{-\eta_{1}}(\sigma+1)^{-\eta_{2}}}{(a_{2}-a_{1})},
\end{equation}
where $\Re(\eta_{1})>0,~\Re(\eta_{2})>0$, $(a_{2}-a_{1})+\xi(u-a_{1})+\sigma(a_{2}-u) \neq 0$ and $a_{1}\neq a_{2}$.\\

On taking $h_{1}(u)=(a_{2}-a_{1})+\xi(u-a_{1})+\sigma(a_{2}-u)$, $h_{2}(u)=(u-a_{1})(a_{2}-u)/{h_{1}^{2}}(u)$ and $\eta_{3}=-(\eta_{1}+\eta_{2})$ in \eqref{2.1}, expanding the multi-index Mittag-Leffler function $E_{(\epsilon_{i}),(\omega_{i})}$ by its defining series, applying the result \eqref{2.9} on the right side and after some simplification, we obtain our desired result \eqref{2.8}.
\end{proof}

\section{\bf Beta-type integrals involving a generating function of two variables}

This section deals with some beta-type integrals involving a generating function of two variables. The generating function of two variables $G(u,t)$ is defined as follows \cite{s21}:
\begin{equation}\label{3.1} G(u,t)=\sum_{r=0}^{\infty}{a}_r g_r(u){t}^r,\end{equation}
where each member of the generated set $\left\lbrace g_r(u)\right\rbrace _{r=0}^\infty$ is independent of $t$, and the coefficient set $\left\lbrace a_r\right\rbrace _{r=0}^\infty$ may contain the parameters of the set $\left\lbrace g_r(u)\right\rbrace _{r=0}^\infty$ but is independent of $u$ and $t$.

\begin{theorem}
Let the generating function $G(u,t)$ defined by \eqref{3.1} and be such that $G\left\{u,t{y}^{\mu}(1-y)^{\nu}\right\}$ is uniformly convergent for $y \in(0,1)$, $\mu,~\nu\geq0$ and $\mu+\nu>0$. Then we have
\begin{equation}\label{3.2} \int_{0}^{1} y^{m-1} (1-y)^{n-m-1} G\left\{u,t{y}^{\mu}(1-y)^{\nu}\right\} E_{\epsilon_{i}),(\omega_{i})}[qy(1-y)] dy\end{equation}
\begin{equation*}=\sum_{r=0}^{\infty}{a}_r g_r(u){t}^r~{}_3\Psi_{l+1}
\left[\begin{array}{ccc}
  (m+\mu r,1), & (n-m+\nu r,1), & (1,1); \\
  ~ & ~ & ~ \\
  (\omega_{1}, \epsilon_{1}), & \cdots (\omega_{l}, \epsilon_{l}), & (n+\mu r+\nu r, 2);
\end{array} q \right],
\end{equation*}
where $n>m>0$ and $q\in \mathbb{C}$.
\end{theorem}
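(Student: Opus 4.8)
The plan is to follow exactly the same strategy used in the proofs of Theorems~2.1 through 2.4: expand every available series, interchange the orders of summation and integration (justified by the stated uniform convergence), and reduce the inner integral to a known Beta integral. First I would expand the multi-index Mittag-Leffler function by its defining series \eqref{1.5}, writing
\begin{equation*}
E_{(\frac{1}{\epsilon_i}),(\omega_i)}[qy(1-y)]=\sum_{k=0}^{\infty}\frac{q^{k}\,y^{k}(1-y)^{k}}{\Gamma(\omega_1+\epsilon_1 k)\cdots\Gamma(\omega_l+\epsilon_l k)},
\end{equation*}
and simultaneously expand the generating function using \eqref{3.1} in the form $G\{u,t y^{\mu}(1-y)^{\nu}\}=\sum_{r=0}^{\infty}a_r g_r(u)\,t^{r}\,y^{\mu r}(1-y)^{\nu r}$. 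Substituting both expansions into the left-hand side of \eqref{3.2} collapses the integrand into a double sum over $r$ and $k$ whose summand, apart from the $r,k$-dependent constants, is just a power of $y$ times a power of $(1-y)$.

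Next I would interchange the summations with the integral and evaluate the resulting elementary Beta integral. After collecting exponents, the $y$-integral becomes
\begin{equation*}
\int_{0}^{1} y^{(m+\mu r+k)-1}(1-y)^{(n-m+\nu r+k)-1}\,dy=B(m+\mu r+k,\;n-m+\nu r+k),
\end{equation*}
valid because the hypotheses $n>m>0$, $\mu,\nu\ge 0$ guarantee both arguments have positive real part. Writing this Beta function as a ratio of Gamma functions and isolating the $k$-sum, I would recognize the factor $\Gamma(m+\mu r+k)\Gamma(n-m+\nu r+k)/\Gamma(n+\mu r+\nu r+2k)$ together with the denominator Gammas $\prod_{j}\Gamma(\omega_j+\epsilon_j k)$ and the $q^{k}/k!$ term. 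The extra factor $k!$ in the numerator that is needed to match the Wright-function format in \eqref{1.1} is supplied by rewriting $\Gamma(1+k)=k!$ as the image of the pair $(1,1)$ in the upper parameter list, and the doubling of $k$ in the lower argument $n+\mu r+\nu r+2k$ accounts for the coefficient $2$ in the pair $(n+\mu r+\nu r,2)$. This identifies the $k$-sum precisely with the ${}_3\Psi_{l+1}$ on the right-hand side, and the outer $r$-sum reproduces $\sum_r a_r g_r(u) t^{r}$, completing the derivation.

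The main obstacle, as in the earlier theorems, is the justification of interchanging the sums and the integral. Here the essential hypothesis is the uniform convergence of $G\{u,t y^{\mu}(1-y)^{\nu}\}$ on $(0,1)$, which licenses moving the $r$-sum past the integral; the $k$-sum over the entire Mittag-Leffler function converges absolutely and uniformly on compact sets because of the rapidly growing product of Gamma functions in the denominator, so that sum may likewise be interchanged. Once the interchange is granted, the remaining work is the bookkeeping of exponents and the recognition of the Wright-function pattern, both of which are routine. I would therefore devote the care to verifying the convergence conditions and simply present the index arithmetic as ``after some simplification,'' matching the style of the preceding proofs.
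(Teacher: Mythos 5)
Your proposal is correct and follows essentially the same route as the paper's (much terser) proof: expand $G$ via \eqref{3.1} and the multi-index Mittag-Leffler function via its defining series, integrate term by term using the Beta function, and assemble the $k$-sum into the ${}_3\Psi_{l+1}$ with $\Gamma(1+k)=k!$ supplying the $(1,1)$ parameter pair. You in fact supply more detail (the explicit Beta integral and the convergence discussion) than the paper does.
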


\begin{proof}
On using \eqref{3.1} in the left side of \eqref{3.2}, expanding the multi-index Mittag-Leffler function $E_{(\epsilon_{i}),(\omega_{i})}$ as its defining series, applying the definition of beta function and after some simplification, we arrive at \eqref{3.2}.
\end{proof}

\begin{corollary}
On taking $n=2m$ and $\mu=\nu$ in \eqref{3.2}, we have
\begin{equation}\label{3.3} \int_{0}^{1} y^{m-1} (1-y)^{m-1} G\left[u,t\{y(1-y)\}^{\nu}\right] E_{(\epsilon_{i}),(\omega_{i})}[qy(1-y)] dy\end{equation}
\begin{equation*}=\sum_{r=0}^{\infty}{a}_{r} g_{r}(u){t}^r~{}_3\Psi_{l+1}
\left[\begin{array}{ccc}
  (m+\nu r,1), & (m+\nu r,1), & (1,1); \\
  ~ & ~ & ~ \\
  (\omega_{1}, \epsilon_{1}), & \cdots (\omega_{l}, \epsilon_{l}), & (2m+2\nu r, 2);
\end{array} q \right].
\end{equation*}
\end{corollary}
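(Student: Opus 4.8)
The plan is to obtain \eqref{3.3} as a direct specialisation of the identity \eqref{3.2} established in the preceding theorem, with no fresh integration required. First I would record that \eqref{3.2} is valid for all $n>m>0$ and $\mu,\nu\geq0$ with $\mu+\nu>0$, and then simply set $n=2m$ and $\mu=\nu$ throughout. On the left-hand side this triggers two reductions. The weight $(1-y)^{n-m-1}$ becomes $(1-y)^{2m-m-1}=(1-y)^{m-1}$, so the two beta-factors $y^{m-1}$ and $(1-y)^{m-1}$ acquire equal exponents, and the argument of the generating function collapses via $ty^{\mu}(1-y)^{\nu}=ty^{\nu}(1-y)^{\nu}=t\{y(1-y)\}^{\nu}$. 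This reproduces exactly the integrand on the left of \eqref{3.3}.

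On the right-hand side I would substitute the same values into the parameters of the Wright function. The two upper parameters $(m+\mu r,1)$ and $(n-m+\nu r,1)$ both become $(m+\nu r,1)$, while $(1,1)$ is unchanged; the final lower parameter $(n+\mu r+\nu r,2)$ becomes $(2m+2\nu r,2)$. The remaining lower parameters $(\omega_{1},\epsilon_{1}),\dots,(\omega_{l},\epsilon_{l})$ and the argument $q$ are left untouched, so the series on the right of \eqref{3.3} emerges term by term from that of \eqref{3.2}.

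Since the argument is a pure substitution, there is no analytic obstacle to overcome; the only points meriting a moment's care are the hypotheses. I would verify that $n>m>0$ degenerates to $m>0$, that $\mu+\nu>0$ becomes $\nu>0$ (consistently with $\nu\geq0$), and that the uniform convergence of $G\{u,t\{y(1-y)\}^{\nu}\}$ on $(0,1)$ is precisely the specialised form of the uniform convergence assumption already imposed in the parent theorem, which is what licenses the interchange of summation and integration used there. With these checks in place, \eqref{3.3} follows at once.
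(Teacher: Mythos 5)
Your proposal is correct and matches the paper's treatment: the corollary is obtained purely by substituting $n=2m$ and $\mu=\nu$ into \eqref{3.2}, which is exactly what you do (the paper itself offers no further argument). Your extra checks on the degenerated hypotheses are sound and, if anything, slightly more careful than the source.
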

Next, by making use of Theorem 3.1, we derive a few more interesting integrals in the following examples:\\

\noindent {\bf Example 3.1.} Let us consider the generating function of the hypergeometric function \cite[p.~44(8)]{s21}
\begin{equation*} G(u,t) =(1-ut)^{-c}=\sum_{r=0}^{\infty}\frac{(c)_r(ut)^r}{r!}=~{}_1F_{0}(c;-~;ut)\qquad(\left|ut\right| <1).\end{equation*}
By making use of this generating function (with $u=1$) and Theorem 3.1, we obtain
\begin{equation}\label{3.4} \int_{0}^{1} y^{m-1} (1-y)^{n-m-1} [1-t{y}^{\mu}(1-y)^{\nu}]^{-c} E_{(\epsilon_{i}),(\omega_{i})}[qy(1-y)] dy\end{equation}
\begin{equation*}=\sum_{r=0}^{\infty}\frac{{c}_r {t}^r}{r!}~{}_3\Psi_{l+1}
\left[\begin{array}{ccc}
  (m+\mu r,1), & (n-m+\nu r,1), & (1,1); \\
  ~ & ~ & ~ \\
  (\omega_{1}, \epsilon_{1}), & \cdots (\omega_{l}, \epsilon_{l}), & (n+\mu r+\nu r, 2);
\end{array} q \right],
\end{equation*}
where $n>m>0$, $\mu,\nu\geq0$ and $\mu+\nu>0$.\\

\noindent {\bf Example 3.2.} Let us consider the generating function \cite[p.409(2)]{s16}
\begin{equation}\label{3.5} G(u,t)=\sum_{r=0}^{\infty}\frac{(c)_r}{(d)_r}~_1F_1(c;d+r;u)\frac{t^r}{r!}=\Phi_2[c,c;d;u,t]\quad\left( \left|u\right| ,\left| t\right| \right<\infty), \end{equation}
where $\Phi_2$ is Humbert's confluent hypergeometric function \cite{s21}.\\

\noindent Use of this generating function and Theorem 3.1, yields

\begin{equation}\label{3.6} \int_{0}^{1} y^{m-1} (1-y)^{n-m-1}~\Phi_2[c,c;d;u,ty^{\mu}(1-y)^{\nu}] E_{(\epsilon_{i}),(\omega_{i})}[qy(1-y)] dy\end{equation}
\begin{equation*}=\sum_{r=0}^{\infty}\frac{(c)_r {t}^r}{(d)_{r}~r!} {}_1F_1[c;d+r;u]~{}_3\Psi_{l+1}
\left[\begin{array}{ccc}
  (m+\mu r,1), & (n-m+\nu r,1), & (1,1); \\
  ~ & ~ & ~ \\
  (\omega_{1}, \epsilon_{1}), & \cdots (\omega_{l}, \epsilon_{l}), & (n+\mu r+\nu r, 2);
\end{array} q \right],
\end{equation*}
where $n>m>0$, $\mu,\nu\geq0$, $\mu+\nu>0$ and $|u|<\infty$.\\

\noindent {\bf Example 3.3.} Let us consider the generating function \cite[p.276(1)]{s17}
\begin{equation}\label{3.7} G(u,t)=(1-2ut+t^{2})^{-\alpha}=\sum_{r=0}^{\infty}C_{r}^{(\alpha)}(u) t^r,\end{equation}
where the $C_{r}^{(\alpha)}(u)$ are the Gegenbauer, or ultraspherical, polynomials \cite[p.~444]{s12}.\\

\noindent By making use of this generating function (with $u=1$), Theorem 3.1 and fact that $C_{r}^{(\alpha)}(1)=\frac{(2\alpha)_{r}}{r!}$ (see \cite[Eq.(18.5.9)]{s12} ), we find

\begin{equation}\label{3.8} \int_{0}^{1} y^{m-1} (1-y)^{n-m-1}~[1-ty^{\mu}(1-y)^{\nu}]^{-2\alpha} E_{(\epsilon_{i}),(\omega_{i})}[qy(1-y)] dy\end{equation}
\begin{equation*}=\sum_{r=0}^{\infty}\frac{(2 \alpha)_r~{t}^r}{r!}~{}_3\Psi_{l+1}
\left[\begin{array}{ccc}
  (m+\mu r,1), & (n-m+\nu r,1), & (1,1); \\
  ~ & ~ & ~ \\
  (\omega_{1}, \epsilon_{1}), & \cdots (\omega_{l}, \epsilon_{l}), & (n+\mu r+\nu r, 2);
\end{array} q \right],
\end{equation*}
where $n>m>0$, $\mu,\nu\geq0$ and $\mu+\nu>0$.\\

\noindent{\bf Remark.}
On setting $l=2$ with $\epsilon_{1}=\lambda$, $\epsilon_{2}=0$, and $\omega_{1}=\omega_{2}=1$ in all the integrals derived in Sections 2 and 3, we easily recover the results of Jabee {\it et al.} \cite{saima}.

\section{\bf Concluding remarks}
In the present research note, we have evaluated a list of generalized beta-type integral operators involving the multi-index Mittag-Leffler function $E_{\epsilon_{i}),(\omega_{i})}$. We have also pointed out some particular cases of our main results. In this section, we briefly consider the following generalizations of Theorem 2.1 and Theorem 3.1:

\begin{theorem} The following integral formula holds true:

\begin{equation*} \mathbb{J}_{0,1,\epsilon_{i},\omega_{i}}^{\eta_{1}, \eta_{2}, 1}\left[\prod_{i=1}^{n}(1-z_{i}u)^{-\beta_{i}},u(1-u);q\right]=\frac{1}{B(\eta_{1}, \eta_{2})} \sum_{r_{1},\cdots,r_{n}=0}^{\infty} \frac{(\beta_{1})_{r_{1}} \cdots (\beta_{n})_{r_{n}}{z_{1}}^{r_{1}}\cdots {z_{n}}^{r_{n}}}{r_{1}!\cdots r_{n}!}\end{equation*}

\begin{equation}\label{4.1} \times {}_3\Psi_{l+1}
\left[\begin{array}{ccc}
  (\eta_{1}+r_{1}+\cdots +r_{n},1), & (\eta_{2},1), & (1,1); \\
  ~ & ~ & ~ \\
  (\omega_{1}, \epsilon_{1}), & \cdots (\omega_{l}, \epsilon_{l}), & (\eta_{1}+\eta_{2}+r_{1}+\cdots +r_{n}, 2);
\end{array} q \right],
\end{equation}
where $\Re(\eta_{1})>0,~\Re(\eta_{2})>0$ and max\{$|z_{1}|,\cdots ,|z_{n}|\}<1$.
\end{theorem}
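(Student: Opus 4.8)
The plan is to follow the template of Theorem 2.1, the only new feature being that the bilinear factor $(1-z_{1}u)^{-\beta_{1}}(1-z_{2}u)^{-\beta_{2}}$ is replaced by the $n$-fold product $\prod_{i=1}^{n}(1-z_{i}u)^{-\beta_{i}}$. First I would substitute $a_{1}=0$, $a_{2}=\eta_{3}=1$, $h_{1}(u)=\prod_{i=1}^{n}(1-z_{i}u)^{-\beta_{i}}$ and $h_{2}(u)=u(1-u)$ into the defining integral \eqref{2.1}, which turns the left-hand side of \eqref{4.1} into
\[
\frac{1}{B(\eta_{1},\eta_{2})}\int_{0}^{1} u^{\eta_{1}-1}(1-u)^{\eta_{2}-1}\prod_{i=1}^{n}(1-z_{i}u)^{-\beta_{i}}\,E_{(\frac{1}{\epsilon_{i}}),(\omega_{i})}[qu(1-u)]\,du .
\]

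Next I would replace the multi-index Mittag-Leffler function by its series \eqref{1.5} in a summation variable $k$, and expand each of the $n$ factors by the generalized binomial theorem $(1-z_{i}u)^{-\beta_{i}}=\sum_{r_{i}=0}^{\infty}(\beta_{i})_{r_{i}}(z_{i}u)^{r_{i}}/r_{i}!$. Writing $R=r_{1}+\cdots+r_{n}$, the integrand then carries the power $u^{\eta_{1}+R+k-1}(1-u)^{\eta_{2}+k-1}$, so term-by-term integration produces the elementary beta integral
\[
\int_{0}^{1} u^{\eta_{1}+R+k-1}(1-u)^{\eta_{2}+k-1}\,du
= B(\eta_{1}+R+k,\eta_{2}+k)
= \frac{\Gamma(\eta_{1}+R+k)\,\Gamma(\eta_{2}+k)}{\Gamma(\eta_{1}+\eta_{2}+R+2k)} .
\]

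Collecting the factor $1/\prod_{j=1}^{l}\Gamma(\omega_{j}+\epsilon_{j}k)$ coming from \eqref{1.5} together with this beta value, and noting that the numerator parameter $(1,1)$ contributes $\Gamma(1+k)=k!$ which exactly cancels the $1/k!$ of the Wright normalisation \eqref{1.1}, I would recognise the inner $k$-summation as precisely the ${}_3\Psi_{l+1}$ with numerator parameters $(\eta_{1}+R,1)$, $(\eta_{2},1)$, $(1,1)$, denominator parameters $(\omega_{1},\epsilon_{1}),\dots,(\omega_{l},\epsilon_{l})$, $(\eta_{1}+\eta_{2}+R,2)$ and argument $q$. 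The remaining free summation over $r_{1},\dots,r_{n}$ then reproduces the prefactor $\prod_{i}(\beta_{i})_{r_{i}}z_{i}^{r_{i}}/r_{i}!$, yielding \eqref{4.1}.

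The only genuine point requiring care --- and the step I expect to be the main obstacle --- is the legitimacy of interchanging the $(n{+}1)$-fold summation with the integration. This is secured by absolute and uniform convergence on $[0,1]$: the hypotheses $\Re(\eta_{1})>0$ and $\Re(\eta_{2})>0$ keep the endpoint weights integrable, the condition $\max\{|z_{1}|,\dots,|z_{n}|\}<1$ guarantees that every binomial series converges uniformly for $u\in[0,1]$, and the multi-index Mittag-Leffler function is entire in its argument. Once this interchange is justified the remainder is the bookkeeping described above, identical in spirit to the proof of Theorem 2.1.
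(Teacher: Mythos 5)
Your proof is correct and reaches \eqref{4.1} by a sound, if slightly different, route. The paper's proof keeps the product $\prod_{i=1}^{n}(1-z_{i}u)^{-\beta_{i}}$ intact and, after expanding the multi-index Mittag-Leffler function, invokes the known Euler-integral representation \eqref{4.2} of the Lauricella function $F_{D}^{(n)}$ (with $\eta_{1}\to\eta_{1}+k$, $\eta_{2}\to\eta_{2}+k$); the ``simplification'' then consists of expanding $F_{D}^{(n)}$ as its $n$-fold series and recombining the Gamma factors into the ${}_3\Psi_{l+1}$. You instead expand each binomial factor at the outset and integrate with the elementary beta integral, which produces exactly the coefficient $\Gamma(\eta_{1}+R+k)\Gamma(\eta_{2}+k)/\Gamma(\eta_{1}+\eta_{2}+R+2k)$ that the paper obtains from the ratio $(\eta_{1}+k)_{R}/(\eta_{1}+\eta_{2}+2k)_{R}$ times $B(\eta_{1}+k,\eta_{2}+k)$. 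The two computations are the same once the Lauricella identity is unwound --- your version in effect in-lines the proof of \eqref{4.2} --- so your argument is more self-contained (no external citation needed), while the paper's is shorter on the page and makes the connection to $F_{D}^{(n)}$ explicit. Your attention to the interchange of summation and integration, justified by $\Re(\eta_{1})>0$, $\Re(\eta_{2})>0$, $\max\{|z_{1}|,\dots,|z_{n}|\}<1$ and the entirety of the Mittag-Leffler function, is a point the paper passes over in silence, and is the only place where any analysis is actually required.
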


\begin{proof}
We have (see \cite[p.965(20)]{s11})
\begin{equation}\label{4.2} \int_{0}^{1}u^{\eta_{1}-1} (1-u)^{\eta_{2}-1} \prod_{i=1}^{n}(1-z_{i}u)^{-\beta_{i}}du=B(\eta_{1}, \eta_{2})
\end{equation}
\begin{equation*}
\times F_{D}^{(n)}(\eta_{1}, \beta_{1},\cdots \beta_{n}; \eta_{1}+\eta_{2}; z_{1},\cdots z_{n}),
\end{equation*}
where $\Re(\eta_{1})>0,~\Re(\eta_{2})>0$, max\{$|z_{1}|,\cdots|z_{n}|\}<1$ and $F_{D}^{(n)}$ is Lauricella's hypergeometric function of $n$ variables \cite[p.60(4)]{s21}.

On setting $a_{1}=0,~a_{2}=\eta_{3}=1,~h_{2}(u)=u(1-u)$ and $h_{1}(u)=\prod\limits_{i=1}^{n}(1-z_{i}u)^{-\beta_{i}}$ in \eqref{2.1}, expanding the multi-index Mittag-Leffler function $E_{(\epsilon_{i}),(\omega_{i})}$ by its defining series, applying the result \eqref{4.2} on the right side and after some simplification, we obtain our required result \eqref{4.1}.
\end{proof}
Clearly, for $n=2$ Theorem 4.1 immediately reduces to Theorem 2.1.

\begin{theorem}
Let us consider the conditions for $G(u,t)$ defined by \eqref{3.1} to be same as in Theorem 3.1. Then we have
\begin{equation}\label{4.3} \int_{0}^{1} y^{m-1} (1-y)^{n-m-1} \prod_{i=1}^{k}(1-z_{i}y)^{-\beta_{i}}~G\left\{u,t{y}^{\mu}(1-y)^{\nu}\right\} \end{equation}
\begin{equation*}\times E_{(\epsilon_{i}),(\omega_{i})}[qy(1-y)] dy=\sum_{r,r_{1},\cdots,r_{k}=0}^{\infty}{a}_r g_{r}(u){t}^r~\frac{(\beta_{1})_{r_{1}} \cdots (\beta_{k})_{r_{k}}{z_{1}}^{r_{1}}\cdots {z_{k}}^{r_{k}}}{r_{1}!\cdots r_{k}!}
\end{equation*}
\begin{equation*}
\times{}_3\Psi_{l+1}
\left[\begin{array}{ccc}
  (m+\mu r+r_{1}+\cdots+r_{k},1), & (n-m+\nu r,1), \\
  ~ & ~ \\
  (\omega_{1}, \epsilon_{1}), & \cdots (\omega_{l}, \epsilon_{l}),
\end{array}  \right.
\end{equation*}
\begin{equation*}
\left.\begin{array}{cc}
  ~ & (1,1); \\
  ~ & ~ \\
  ~ & (n+\mu r+\nu r+r_{1}+\cdots+r_{k}, 2);
\end{array} q \right]
\end{equation*}
\end{theorem}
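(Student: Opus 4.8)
The plan is to follow the strategy already used for Theorem 3.1, now carrying along the extra product factor $\prod_{i=1}^{k}(1-z_iy)^{-\beta_i}$ exactly as in the proof of Theorem 4.1. First I would replace each of the three $y$-dependent ingredients in the integrand by its defining series: the generating function via \eqref{3.1}, contributing $\sum_{r=0}^{\infty} a_r g_r(u)\, t^r\, y^{\mu r}(1-y)^{\nu r}$; each binomial factor via $(1-z_iy)^{-\beta_i}=\sum_{r_i=0}^{\infty}\frac{(\beta_i)_{r_i}}{r_i!}(z_iy)^{r_i}$; and the multi-index Mittag-Leffler function via \eqref{1.5}, contributing $\sum_{p=0}^{\infty}\frac{q^p\, y^p(1-y)^p}{\Gamma(\omega_1+\epsilon_1 p)\cdots\Gamma(\omega_l+\epsilon_l p)}$ (I use the summation index $p$ here to avoid the clash with the fixed number $k$ of product factors).

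Second, under the uniform convergence hypothesis imposed on $G\{u,ty^\mu(1-y)^\nu\}$, together with $|z_i|<1$ to control the binomial series and the absolute convergence of the Mittag-Leffler series, I would interchange the multiple summations over $r, r_1,\ldots,r_k, p$ with the integration over $y$. The surviving integral is an elementary beta integral: collecting all powers, the exponent of $y$ is $m+\mu r+(r_1+\cdots+r_k)+p-1$ and that of $1-y$ is $n-m+\nu r+p-1$, whence
\begin{equation*}
\int_0^1 y^{m+\mu r+r_1+\cdots+r_k+p-1}(1-y)^{n-m+\nu r+p-1}\,dy=\frac{\Gamma(m+\mu r+r_1+\cdots+r_k+p)\,\Gamma(n-m+\nu r+p)}{\Gamma(n+\mu r+\nu r+r_1+\cdots+r_k+2p)}.
\end{equation*}

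Third, I would reassemble the series. The sums over $r$ and over $r_1,\ldots,r_k$ factor out as the prefactor $\sum_{r,r_1,\ldots,r_k} a_r g_r(u)\, t^r \prod_i \frac{(\beta_i)_{r_i}}{r_i!} z_i^{r_i}$, while the remaining sum over $p$ of the gamma quotient above multiplied by $q^p/\prod_{j}\Gamma(\omega_j+\epsilon_j p)$ is exactly the ${}_3\Psi_{l+1}$ displayed in \eqref{4.3}. Here the numerator pair $(1,1)$ supplies $\Gamma(1+p)=p!$, which cancels the factor $1/p!$ built into the Wright series \eqref{1.1} and thereby converts its natural $x^p/p!$ into $q^p$; the two remaining numerator entries and the combined denominator entry $(n+\mu r+\nu r+r_1+\cdots+r_k,2)$ are then read off directly from the three gamma factors produced by the beta integral, while the entries $(\omega_1,\epsilon_1),\ldots,(\omega_l,\epsilon_l)$ come from the Mittag-Leffler expansion.

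I do not anticipate a genuine analytic difficulty, since every step mirrors the proofs already given for Theorems 3.1 and 4.1. The one place demanding care is the bookkeeping: tracking how each of the summation indices $r, r_1,\ldots,r_k, p$ enters the exponents of $y$ and $1-y$, and verifying that the resulting three numerator and $l+1$ denominator parameters of the Wright function align precisely with the gamma factors coming from the beta integral and the Mittag-Leffler series. If full rigor were wanted, the term-by-term integration would be justified by the absolute and uniform convergence of the combined multiple series on $(0,1)$, which follows from $|z_i|<1$, $n>m>0$, and the assumed uniform convergence of $G$.
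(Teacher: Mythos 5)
Your proof is correct and follows exactly the route the paper intends: the paper omits the details, stating only that the argument is "similar to that of Theorem 3.1," and your expansion of the generating function, the binomial factors, and the multi-index Mittag-Leffler series, followed by term-by-term beta integration and reassembly into the ${}_3\Psi_{l+1}$ (with $\Gamma(1+p)=p!$ cancelling the Wright series' $1/p!$), is precisely that argument written out in full. The parameter bookkeeping in your beta integral matches the displayed result.
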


\begin{proof}
The proof of this theorem is similar to that of Theorem 3.1. Therefore we omit its proof.\end{proof}

Also, it is noticed that, by using the special cases of the multi-index Mittag-Leffler function $E_{(\epsilon_{i}),(\omega_{i})}$ (given in \eqref{1.6a}, \eqref{1.6}, \eqref{1.7} and \eqref{1.8}) in our main results derived in this paper, we can establish some new families of beta-type integrals involving the Wiman function, Bessel function, Struve function and Lommel function.

\end{document}